\pdfoutput=1

\documentclass[final]{amsart}
\addtolength{\textwidth}{0.05in}
\calclayout

\usepackage[utf8]{inputenc}
\usepackage{dsfont} % For \mathds 1.
\usepackage{comment}

\usepackage{hyperref}
\hypersetup{
	colorlinks = true,
	linkcolor = {blue},
	urlcolor = {black},
	citecolor = {black}
}

\usepackage{amssymb}
\usepackage{enumitem}
\usepackage[capitalize]{cleveref}
\usepackage{color}

\usepackage[backend=biber,
            style=alphabetic,
            bibencoding=utf8,
            language=auto,
            autolang=other,
            giveninits=true,
            doi=false,
            isbn=false,
            url=false,
            maxnames=10,
            sorting=nty]{biblatex}
\usepackage{asymptote}

\usepackage{mathtools}
\usepackage{faktor} % Nice quotients
\usepackage{tikz}
\usepackage{mathrsfs} % mathscr
\usetikzlibrary{babel} % Fix buggy interaction with babel package
\usetikzlibrary{cd} % Commutative diagrams

\crefname{enumi}{}{} % Avoid showing ``item (1)''

\newtheorem{theorem}{Theorem}[section]
\newtheorem*{theorem*}{Theorem}
\newtheorem{lemma}[theorem]{Lemma}
\newtheorem{proposition}[theorem]{Proposition}

\theoremstyle{remark}

\newtheorem*{remark*}{Remark}
\newtheorem{question}{Open question}[section]

\theoremstyle{definition}

\usepackage[backend=biber,style=alphabetic,bibencoding=utf8,language=auto,autolang=other,giveninits=true,doi=false,isbn=false,url=false,maxnames=10,sorting=nty]{biblatex}

\newcommand{\N}{\ensuremath{\mathbb N}} % Natural numbers
 % Integers
 % Rationals
\newcommand{\R}{\ensuremath{\mathbb R}} % Real numbers

 % Complex numbers
 % Fields
 % Power set

\newcommand{\defeq}{\ensuremath{\coloneqq}}% Equal in a definition.

 % Family of finite multisets
 % Rank
 % FS
 % Image
 % Automorphisms
 % Homomorphisms
		% Cyclic group
 % Multiplicative Cyclic group
		% Order
\newcommand{\emptyparam}{\ensuremath{\,\cdot\,}}% Empty parameter (a dot).

% The definitions of abs and norms are more complex than a simple \DeclarePairedDelimiter in order
% to fix the behavior of subscripts.
% The whole point of the hack is that instead of $\lvert ...\rvert$, \abs{...} will do ${\lvert...\rvert}$.
% The presence of { } fixes the subscripts.
\makeatletter % Absolute value
\DeclarePairedDelimiter{\@tmpabs}{\lvert}{\rvert}
\newcommand{\@absstar}[1]{{\@tmpabs*{#1}}}
\newcommand{\@absnostar}[2][]{{\@tmpabs[#1]{#2}}}
\newcommand{\abs}{\@ifstar\@absstar\@absnostar}
\makeatother

\makeatletter % Norm
\DeclarePairedDelimiter{\@tmpnorm}{\lVert}{\rVert}
\newcommand{\@normstar}[1]{{\@tmpnorm*{#1}}}
\newcommand{\@normnostar}[2][]{{\@tmpnorm[#1]{#2}}}
\newcommand{\norm}{\@ifstar\@normstar\@normnostar}
\makeatother

\newcommand\eps{\ensuremath{\varepsilon}}
\DeclareMathOperator{\Per}{Per}%Perimeter

\newcommand{\Haus}{\ensuremath{\mathscr H}} % Hausdorff measure
\newcommand{\Leb}{\ensuremath{\mathscr L}} % Lebesgue measure

\usepackage{pythonhighlight} % For python code

% Function restriction
\newcommand{\restricts}[2] {
	#1 % the function
	\raisebox{-.3ex}{$|$}_{#2}
}

\newcommand{\prob}{\ensuremath{\mathcal{P}}}

\DeclareMathOperator{\dist}{dist} 
\addbibresource{bibliography.bib}

\begin{document}

\title{On the Isoperimetric Profile of the Hypercube}

\author{Federico Glaudo}
\address{Federico Glaudo
\hfill\break School of Mathematics, Institute for Advanced Study, 1 Einstein Dr., Princeton NJ 05840, U.S.A.}
\email{fglaudo@ias.edu}

\begin{abstract}
We prove that a subset of the hypercube $(0,1)^d$ with volume sufficiently close to $\frac12$ has (relative) perimeter greater than or equal to $1$. This settles a conjecture by Brezis and Bruckstein.
We also prove that, in contrast with what happens for the high-dimensional sphere $\mathbb S^d$, the isoperimetric profile of the hypercube $(0,1)^d$ does not converge to the Gaussian isoperimetric profile as $d\to\infty$. 
\end{abstract}

\maketitle

\section{Introduction}

For an open set $\Omega\subseteq\R^d$, the relative isoperimetric problem in $\Omega$ consists of minimizing the perimeter (in $\Omega$) of a set $E\subseteq\Omega$ with fixed volume. So, given $0<\lambda<\abs{\Omega}$, one is interested in the minimization problem
\begin{equation}\label{eq:rel-isop}
    I_\Omega(\lambda)\defeq\inf \big\{\Per(E, \Omega): \, E\subseteq \Omega \text{ so that } \abs{E}=\lambda\big\},
\end{equation}
where $\Per(E, \Omega)$ denotes the perimeter of $E$ inside $\Omega$; if $E$ has a smooth boundary then $\Per(E, \Omega)$ coincides with $\Haus^{d-1}(\Omega\cap\partial E)$ (see \cref{subsec:perimeter} for the general definition).
The (relative) isoperimetric profile of $\Omega$, denoted by $I_{\Omega}:[0,\abs{\Omega}]\to [0,\infty)$ is the function so that $I_{\Omega}(\lambda)$ is the value of the infimum appearing in \cref{eq:rel-isop} (and $I_{\Omega}(0)=0$ and $I_\Omega(\abs{\Omega})=0$ if $\abs{\Omega}<\infty$).

The (relative) isoperimetric problem is a classical question with a multitude of applications that has received considerable attention recently. The vast literature on the topic makes it hard to give a complete list of references, so we refer the reader to the three recent works \cite{LeonardiRitoreVernadakis2022,FuscoMorini2023,AntonelliBrueFogagnoloPozzetta2022} and to the references therein.
This paper investigates in particular the isoperimetric profile of the $d$-dimensional hypercube $(0,1)^d$.

\subsection{The relative isoperimetric problem in \texorpdfstring{$(0,1)^d$}{the hypercube}}
To frame appropriately our results, let us recall what is known about the relative isoperimetric problem in the hypercube.

\begin{figure}[htb]
\centering
\begin{tikzpicture}[domain=0:1,scale=5,
	samples at={0.0, 0.01, 0.02, 0.03, 0.04, 0.05, 0.06, 0.07, 0.08, 0.09, 0.10, 0.11, 0.12, 0.13, 0.14, 0.15, 0.16, 0.17, 0.18, 0.19, 0.2, 0.21, 0.22, 0.23, 0.24, 0.25, 0.26, 0.27, 0.28, 0.29, 0.3, 0.31, 0.32, 0.33, 0.34, 0.35, 0.36, 0.37, 0.38, 0.39, 0.4, 0.41, 0.42, 0.43, 0.44, 0.45, 0.46, 0.47, 0.48, 0.49, 0.5, 0.51, 0.52, 0.53, 0.54, 0.55, 0.56, 0.57, 0.58, 0.59, 0.6, 0.61, 0.62, 0.63, 0.64, 0.65, 0.66, 0.67, 0.68, 0.69, 0.7, 0.71, 0.72, 0.73, 0.74, 0.75, 0.76, 0.77, 0.78, 0.79, 0.8, 0.81, 0.82, 0.83, 0.84, 0.85, 0.86, 0.87, 0.88, 0.89, 0.9, 0.91, 0.92, 0.93, 0.94, 0.95, 0.96, 0.97, 0.98, 0.99, 1}]

\makeatletter
\pgfmathdeclarefunction{erf}{1}{%
  \begingroup
    \pgfmathparse{#1 > 0 ? 1 : -1}%
    \edef\sign{\pgfmathresult}%
    \pgfmathparse{abs(#1)}%
    \edef\x{\pgfmathresult}%
    \pgfmathparse{1/(1+0.3275911*\x)}%
    \edef\t{\pgfmathresult}%
    \pgfmathparse{%
      1 - (((((1.061405429*\t -1.453152027)*\t) + 1.421413741)*\t 
      -0.284496736)*\t + 0.254829592)*\t*exp(-(\x*\x))}%
    \edef\y{\pgfmathresult}%
    \pgfmathparse{(\sign)*\y}%
    \pgfmath@smuggleone\pgfmathresult%
  \endgroup
}

\draw[-] (0,0) -- (1,0);

\draw[color=red]    plot (\x,
	{min(1,   min(    sqrt(pi*\x),    sqrt(pi*(1-\x))   ))}
); % $I_{(0,1)^2}$

\draw[color=blue,dashed]    plot (\x,
	{min(
		min(1,   min(    sqrt(pi*\x),  1.5*(1.333*pi)^0.333*\x^0.666   )),
		min(1,   min(    sqrt(pi*(1-\x)),  1.5*(1.333*pi)^0.333*(1-\x)^0.666   ))}
); % $I_{(0,1)^3}$

\draw plot[smooth] file {I-gamma.table}; % $\sqrt{2\pi}I_\gamma$

\draw node[below] at (0, 0) {$0$};
\draw node[below] at (0.5, 0) {$\lambda$};
\draw node[below] at (1, 0) {$1$};

\draw[color=red] node at (1, 1) {$I_{(0,1)^2}$};
\draw[color=blue] node at (1.2, 0.9) {$I_{(0,1)^3}$ (conjectural)};
\draw[color=black] node at (1, 0.8) {$\sqrt{2\pi}I_\gamma$};
 
\end{tikzpicture}
\caption{The blue dashed graph represents the conjectural profile of the cube $(0,1)^3$. The figure shows a number of features of the problem: the isoperimetric profile is concave, the lower bound $\sqrt{2\pi}I_\gamma$ is remarkably close to the actual value of $I_{(0,1)^d}$, the profile $I_{(0,1)^d}$ is constant in a neighborhood of $\lambda=\frac12$, the profiles $I_{(0,1)^d}$ are decreasing with respect to the dimension $d\ge 1$. }\label{fig:isop_profiles}
\end{figure}

In the case of the square $(0,1)^2$, the isoperimetric profile (along with the minimizers of the relative isoperimetric problem) is known~\cite{BrezisBruckstein2021}. In dimension $d=3$ (so, for the cube $(0,1)^3$), it is conjectured~\cite[pg. 11]{Ros2005} (see also \cite[Theorem 9]{Ritore1997}) that the only minimizers are balls, cylinders, and half-spaces intersected with the cube; under this assumption one can determine exactly the isoperimetric profile of $(0,1)^3$. 
In any dimension, since $(0,1)^d$ is a polytope, for small volumes the minimizers of the relative isoperimetric problem are balls centered at the vertices of $(0,1)^d$ (see \cite[Theorem 6.8]{RitoreVernadakis2015}). As an immediate consequence, one gets
\begin{equation*}
    I_{(0,1)^d}(\lambda) = \frac12 d\abs{B_1^{\R^d}}^{\frac1d}\lambda^{\frac{d-1}{d}} \text{ \,\,for $0<\lambda<\lambda_0(d)$,}
\end{equation*}
where $B_1^{\R^d}$ denotes the unit ball in $\R^d$ and $\lambda_0(d)$ is a dimensional constant that goes to $0$ as $d\to\infty$.

In every dimension $d\ge 1$, it was proven by Hadwiger~\cite{Hadwiger1972} that $I_{(0,1)^d}(\tfrac12)=1$, or equivalently that if $E\subseteq(0,1)^d$ has measure $\abs{E}=\tfrac12$ then its perimeter is at least $1$ (i.e., splitting the cube with a hyperplane parallel to one of its faces is optimal).

We show that the same result holds also if the set $E$ has measure sufficiently close to $\tfrac12$. This settles a conjecture by Brezis and Bruckstein~\cite[Open Problem 10.1]{Brezis2023} (see also \cite[Remark 2]{BrezisBruckstein2021}). The result is new already for $d=3$.
\begin{theorem}\label{thm:main-halfmeasure}
    For $d\ge 1$, there exists $\eps_d>0$ so that $I_{(0,1)^d}(\lambda)=1$ for all $\lambda\in (\tfrac12-\eps_d, \tfrac12-\eps_d)$. Equivalently, any set of finite perimeter $E\subseteq (0,1)^d$ with 
    $\abs*{\abs{E} - \frac12} \le \eps_d$ satisfies $\Per(E, (0, 1)^d) \ge 1$.
    Moreover, $\Per(E, (0, 1)^d)=1$ if and only if $E=\{x\in (0,1)^d:\, \hat v\cdot x < \abs{E}\}$ for some $\hat v\in\{\pm e_1, \pm e_2, \dots, \pm e_d\}$.
\end{theorem}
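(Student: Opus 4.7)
The plan is to prove that for $\lambda$ sufficiently close to $\tfrac12$ every perimeter minimizer at volume $\lambda$ in $(0,1)^d$ is a coordinate slab $\{x:\,\hat v\cdot x<\lambda\}$ with $\hat v\in\{\pm e_1,\dots,\pm e_d\}$. Since the coordinate slab has perimeter $1$ at every volume, this classification of minimizers simultaneously gives the matching lower bound $I_{(0,1)^d}(\lambda)\geq 1$ and the characterization of equality cases in the theorem. I will attack this by a compactness argument followed by a regularity/rigidity analysis.

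First I would use BV-compactness to produce minimizers $E_\lambda$ for every $\lambda\in(0,1)$. Given $\lambda_n\to\tfrac12$, a subsequence of $E_{\lambda_n}$ converges in $L^1$ to some $E_\infty$ with $\abs{E_\infty}=\tfrac12$; lower semicontinuity combined with the upper bound $I_{(0,1)^d}(\lambda_n)\leq 1$ gives $\Per(E_\infty,(0,1)^d)=1$ by Hadwiger. The first genuinely new ingredient is a rigidity refinement of Hadwiger's theorem, namely that every such minimizer $E_\infty$ is one of the $2d$ coordinate half-cubes. I would establish this as a separate lemma, for instance by tracking the equality case of $\Per(E)\geq\abs{D_i\chi_E}$ in each coordinate direction: any direction that realizes equality forces the measure-theoretic outward normal of $\partial^*E$ to be $\pm e_i$ almost everywhere, which together with $\abs{E}=\tfrac12$ and $\Per(E)=1$ pins $E$ down to a coordinate half-cube. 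After relabeling we may then assume $E_\infty=\{x_1<\tfrac12\}$.

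Next, I would upgrade the $L^1$-convergence $E_{\lambda_n}\to E_\infty$ to $C^{2,\alpha}$-graph convergence of $\partial^*E_{\lambda_n}\cap(0,1)^d$ over the slice $\{x_1=\tfrac12\}\cap(0,1)^d$, using $\epsilon$-regularity for volume-constrained perimeter minimizers in the interior of the cube together with boundary regularity at the codimension-one faces (where $\partial^*E_{\lambda_n}$ meets each face orthogonally, by the Neumann condition associated with minimality). One then has $\partial^*E_{\lambda_n}\cap(0,1)^d=\{(f_n(x'),x'):x'\in(0,1)^{d-1}\}$ with $f_n\to\tfrac12$ in $C^{2,\alpha}$, and $f_n$ solves the constant-mean-curvature equation
\begin{equation*}
\operatorname{div}\!\left(\frac{\nabla f_n}{\sqrt{1+\abs{\nabla f_n}^2}}\right)=H_n
\end{equation*}
on $(0,1)^{d-1}$ with Neumann boundary condition $\partial_\nu f_n=0$ on $\partial(0,1)^{d-1}$. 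Integrating the equation and using the Neumann data forces $H_n=0$; testing the resulting minimal-surface equation against $f_n$ and integrating by parts gives $\int\abs{\nabla f_n}^2/\sqrt{1+\abs{\nabla f_n}^2}\,dx'=0$, so $\nabla f_n\equiv 0$ and $E_{\lambda_n}$ is itself a coordinate slab, contradicting any alternative possibility.

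The main obstacle I anticipate is the boundary regularity at the codimension-two edges $\{x_1=\tfrac12,\,x_j\in\{0,1\}\}$ where the limit surface meets $\partial(0,1)^d$. Interior $\epsilon$-regularity and orthogonality at the codimension-one faces are classical, but promoting this to a $C^{2,\alpha}$-graph representation up to those edges requires a local argument; a convenient device is to unfold $E_{\lambda_n}$ by successive reflections across adjacent faces, turning the Neumann problem into an interior CMC problem in a larger open set to which interior $\epsilon$-regularity can be applied directly.
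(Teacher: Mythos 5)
Your overall strategy --- compactness for a sequence $E_n$ of minimizers with $|E_n|\to\frac12$, identification of the $L^1$-limit $E_\infty$ as a coordinate half-cube by a rigidity lemma, upgrading to graph convergence, then concluding via the constant-mean-curvature equation --- matches the paper's. The critical gap is in the rigidity lemma. You want to show that $|E_\infty|=\frac12$ and $\Per(E_\infty,(0,1)^d)=1$ force $E_\infty$ to be a coordinate half-cube, and propose to do so by ``tracking the equality case of $\Per(E)\ge \lvert D_i\mathds{1}_E\rvert$: any direction that realizes equality forces $\nu_{\partial^*E}=\pm e_i$ a.e.'' The implication ``$\Per(E)=\lvert D_i\mathds{1}_E\rvert\Rightarrow\nu=\pm e_i$ a.e.\ $\Rightarrow E$ is a coordinate slab'' is fine; the problem is the premise. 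Nothing in your sketch explains why some coordinate direction should realize equality. One always has $\lvert D_i\mathds{1}_E\rvert\le\Per(E)$, so equality could fail for every $i$, and you have not excluded a hypothetical perimeter-$1$, volume-$\frac12$ set whose boundary normal is genuinely spread across several coordinate directions. The paper closes this (\cref{prop:gaussian-cube-rigidity}) by pulling back to Gaussian space via $\Phi_d$: the cube perimeter of $E$ equals $\sqrt{2\pi}$ times the Gaussian perimeter of $\Phi_d^{-1}(E)$ plus a pointwise nonnegative correction; the Gaussian isoperimetric inequality then forces $\Phi_d^{-1}(E)$ to be a half-space, and the correction term vanishing forces its normal to be a coordinate vector. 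You would need an argument of comparable strength, and it is not a refinement that falls out of Hadwiger's inequality for free.

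The remaining points are lighter but not closed either. Your reflection/unfolding route to boundary regularity near codimension-two edges is a legitimate alternative to the paper's use of Edelen--Li, though you would need to track how the density, excess, and mean-curvature hypotheses of Allard's theorem behave under iterated reflection. More importantly, to start the $\eps$-regularity machine at all you need an a priori bound on $H_n$: you derive $H_n=0$ only \emph{after} the $C^{2,\alpha}$ graph representation is in hand, so something like the paper's observation --- that the superdifferential of the concave profile $I_{(0,1)^d}$ collapses to $\{0\}$ near $\lambda=\frac12$ because $I_{(0,1)^d}$ is squeezed between $\sqrt{2\pi}I_\gamma$ and the constant $1$, both with vanishing derivative there --- is still needed first. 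Once the graph representation is established, your integration-by-parts derivation of $H_n=0$ and then $\nabla f_n\equiv 0$ is clean and is in fact a nice simplification of the paper's final projection argument.
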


For values of the volume distinct from $\tfrac12$, the exact value of $I_{(0,1)^d}$ is not known, but a remarkable lower bound with the Gaussian isoperimetric profile was established in \cite[Theorem 7]{BartheMaurey2000} (see also \cite[Theorem 7]{Ros2005}, \cite[(2.2)]{AmbrosioBourgainBrezisFigalli2016}).

\begin{theorem*}[{\cite[Theorem 7]{BartheMaurey2000}}]
    Let $I_{\gamma}\defeq\varphi\circ\Phi^{-1}$ be the Gaussian isoperimetric profile (see \cref{subsec:perimeter}), where $\varphi(t)\defeq \frac1{\sqrt{2\pi}}\exp(-\tfrac12t^2)$ and $\Phi(t)\defeq \int_{\infty}^t\varphi(s)\,ds$.

    For any $d\ge 1$, it holds that $I_{(0,1)^d} \ge \sqrt{2\pi} I_{\gamma}$; equivalently
    \begin{equation}\label{eq:tmp994}
        \Per(E, (0,1)^d) \ge \sqrt{2\pi}I_\gamma(\abs{E})
    \end{equation}
    for any set $E\subseteq (0,1)^d$ of finite perimeter.
\end{theorem*}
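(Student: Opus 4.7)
The plan is to push the cube forward to $\R^d$ by the componentwise Gaussian quantile map and then invoke the Gaussian isoperimetric inequality on the image. Define $T\colon(0,1)^d\to\R^d$ by $T(x)\defeq(\Phi^{-1}(x_1),\dots,\Phi^{-1}(x_d))$. Since $\Phi_\#(\Leb|_{(0,1)})=\gamma$ in dimension one and $T$ is a product map, $T$ sends Lebesgue measure on $(0,1)^d$ to the standard Gaussian measure $\gamma$ on $\R^d$. In particular, for any set $E\subseteq(0,1)^d$ of finite perimeter the image $A\defeq T(E)$ satisfies $\gamma(A)=|E|$, and the Borell--Sudakov--Tsirelson Gaussian isoperimetric inequality yields
\begin{equation*}
P_\gamma(A)\ge I_\gamma(\gamma(A))=I_\gamma(|E|).
\end{equation*}

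The key step is then to estimate $P_\gamma(A)$ from above by $\Per(E,(0,1)^d)$ via a change of variables on the reduced boundary. The differential $DT(x)$ is the diagonal matrix with entries $1/I_\gamma(x_i)$, while the Gaussian density at $T(x)$ equals $\prod_i\varphi(\Phi^{-1}(x_i))=\prod_i I_\gamma(x_i)$; these two factors cancel up to the tangential contribution $|DT(x)^{-T}\nu(x)|$, where $\nu(x)$ is the unit outer normal to $\partial^*E$ at $x$. A direct computation therefore gives
\begin{equation*}
P_\gamma(A)=\int_{\partial^*E\cap(0,1)^d}\sqrt{\sum_{i=1}^d\nu_i(x)^2\,I_\gamma(x_i)^2}\,d\Haus^{d-1}(x).
\end{equation*}
Because $I_\gamma$ attains its maximum at $\tfrac12$ with $I_\gamma(\tfrac12)=1/\sqrt{2\pi}$ and $|\nu|=1$, the integrand is bounded pointwise by $1/\sqrt{2\pi}$, giving $P_\gamma(A)\le\tfrac{1}{\sqrt{2\pi}}\Per(E,(0,1)^d)$. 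Combining the two displays yields the desired bound.

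The one delicate point is that $T$ and its inverse blow up on $\partial(0,1)^d$, so the change of variables has to be carried out on $E\cap(\eps,1-\eps)^d$ and the conclusion obtained by sending $\eps\to 0$; this is harmless because $\Per(E,(0,1)^d)$ does not see mass on $\partial(0,1)^d$ and Gaussian perimeter is lower semicontinuous. General sets of finite perimeter are reduced to the smooth case by the usual smooth approximation in perimeter. Aside from these routine technicalities, the whole argument is essentially a one-line consequence of the Gaussian isoperimetric inequality after the quantile change of variables, so no step stands out as a genuine obstacle.
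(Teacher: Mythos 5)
Your proof is correct and is essentially the same argument the paper uses to establish this inequality (it is reproven with equality cases in \cref{prop:gaussian-cube-rigidity}, following \cite[Theorem 7]{Ros2005}). The only difference is cosmetic: the paper pulls back to Gaussian space via $\Phi_d\colon\R^d\to(0,1)^d$ and bounds the tangential Jacobian below by $\sqrt{2\pi}$ using $e^{x_i^2}\ge 1$, while you push forward via the inverse quantile map $T=\Phi_d^{-1}$ and bound the dual factor $\sqrt{\sum_i\nu_i^2 I_\gamma(x_i)^2}$ above by $1/\sqrt{2\pi}$ using $I_\gamma\le 1/\sqrt{2\pi}$; these are the same pointwise inequality written in the two coordinate systems, and your surface change-of-variables formula is exactly \cref{lem:jacobian-restriction} combined with the area formula. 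Your remark about the blow-up of $T$ near $\partial(0,1)^d$ is a reasonable precaution, though it is harmless for exactly the reason you give: $\Per(E,(0,1)^d)$ only integrates over $\partial^*E\cap(0,1)^d$, on which $T$ is a smooth diffeomorphism onto $\R^d$, so one can exhaust by $(\eps,1-\eps)^d$ and pass to the limit by monotone convergence (no lower semicontinuity is actually needed for the inequality direction you use).
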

The lower bound shown in this theorem is remarkably precise already in dimensions $d=2,3$ (see \cref{fig:isop_profiles}) and its precision can only improve in higher dimension as $I_{(0,1)^d}$ is decreasing with respect to the dimension $d$. Furthermore, if instead of the cube $(0,1)^d$, one considers the case of the sphere $\mathbb S^d$ (i.e., one studies the isoperimetric problem in the Riemannian manifold $\mathbb S^d$), it turns out that its isoperimetric profile $I_{\mathbb S^d}$, appropriately rescaled, converges to $I_\gamma$ as the dimension $d\to\infty$ (see \cite[Theorem 10, Proposition 11]{Barthe2001} or \cite[Theorem 21]{Ros2005}).

The facts mentioned in the previous paragraph may lead one to expect that, as the dimension $d\to\infty$, the isoperimetric profile of the cube $I_{(0,1)^d}$ converges to $\sqrt{2\pi}I_\gamma$. This is true when evaluating it at $\lambda \in \{0, \tfrac12, 1\}$.
Unexpectedly for the author, we show that this claim is false, i.e., that there is a gap between $\inf_{d\ge 1} I_{(0,1)^d}$ and $\sqrt{2\pi}I_\gamma$.
\begin{theorem}\label{thm:main-highdim}
    For all $d\ge 1$, we have $I_{(0,1)^{d+1}}\le I_{(0,1)^d}$; let $I_{(0,1)^{\infty}}=\inf_{d\ge 1} I_{(0,1)^d}$.
    The function $I_{(0,1)^\infty}:[0,1]\to[0,1]$ is a concave function such that
    \begin{equation*}
        I_{(0,1)^\infty}(\lambda) > \sqrt{2\pi} I_\gamma(\lambda) \text{ for all $\lambda\in (0,1)\setminus\{\tfrac12\}$}.
    \end{equation*}
\end{theorem}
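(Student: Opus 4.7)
The plan attacks the three assertions in order. For monotonicity, I would use a cylinder construction: given $E\subseteq (0,1)^d$ of finite perimeter with $|E|=\lambda$, the lift $E\times(0,1)\subseteq (0,1)^{d+1}$ has volume $\lambda$ and perimeter $\Per(E,(0,1)^d)$, because $\partial(E\times(0,1))\cap (0,1)^{d+1} = (\partial E\cap(0,1)^d)\times(0,1)$ has $d$-dimensional Hausdorff measure $\Per(E,(0,1)^d)$ by Fubini; taking the infimum over $E$ gives $I_{(0,1)^{d+1}}\le I_{(0,1)^d}$. For concavity of $I_{(0,1)^\infty}$, each $I_{(0,1)^d}$ is concave on $[0,1]$ (a standard property of isoperimetric profiles of convex bodies), and concavity is preserved under pointwise limits of monotone decreasing sequences of concave functions, since the concavity inequality passes to the limit termwise.

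The strict inequality is the core of the statement. My plan is first to establish that $I_{(0,1)^\infty}\equiv 1$ on an interval $[\tfrac12-\eps_\infty,\tfrac12+\eps_\infty]$ for some $\eps_\infty>0$. This should follow from revisiting the proof of \cref{thm:main-halfmeasure} to verify that the constants $\eps_d$ produced there are bounded below uniformly in $d$ (the stability estimates should be dimension-independent). Combined with the trivial upper bound $I_{(0,1)^\infty}\le 1$ from the half-hyperplane test set, this yields $I_{(0,1)^\infty}\equiv 1$ on $[\tfrac12-\eps_\infty,\tfrac12+\eps_\infty]$, and since $\sqrt{2\pi}I_\gamma<1$ off $\tfrac12$, strict inequality follows on this neighborhood.

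Next, to propagate strict inequality to $(0,\tfrac12-\eps_\infty)$ (the case $(\tfrac12+\eps_\infty,1)$ being symmetric), I would combine the chord lower bound $I_{(0,1)^\infty}(\lambda)\ge \lambda/(\tfrac12-\eps_\infty)$ on $[0,\tfrac12-\eps_\infty]$ (from concavity with boundary values $0$ and $1$) with the Barthe--Maurey bound $I_{(0,1)^\infty}\ge \sqrt{2\pi}I_\gamma$. Taking the concave envelope of the maximum of these two minorants yields a lower bound strictly larger than $\sqrt{2\pi}I_\gamma$ on an interval $(\lambda_t,\tfrac12)$, where $\lambda_t\in(0,\tfrac12-\eps_\infty)$ is the touching point of the tangent to the graph of $\sqrt{2\pi}I_\gamma$ drawn from the corner $(\tfrac12-\eps_\infty,1)$; strictness comes from the strict concavity of $\sqrt{2\pi}I_\gamma$.

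The main obstacle is closing the residual gap $(0,\lambda_t]$. Since $\sqrt{2\pi}I_\gamma(\lambda)\sim \sqrt{2\pi}\lambda\sqrt{-2\log\lambda}$ as $\lambda\to 0$ is super-linear, no linear lower bound on $I_{(0,1)^\infty}$ can dominate $\sqrt{2\pi}I_\gamma$ near $0$, so the chord-only argument is intrinsically powerless there. I would attempt to close this gap either by iterating the envelope construction---using the newly-established strict inequality on $(\lambda_t,\tfrac12)$ as an improved endpoint condition to push $\lambda_t$ toward $0$---or by supplying a genuinely better lower bound on $I_{(0,1)^\infty}(\lambda)$ for small $\lambda$, perhaps via a quantitative refinement of the Barthe--Maurey inequality that is strict for $\lambda\ne\tfrac12$, or an explicit analysis of competitors exploiting the corner or edge structure of $(0,1)^d$.
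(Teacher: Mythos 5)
Your monotonicity and concavity arguments are correct and match the paper's. The issue is with the strict inequality, which is the real content of the theorem.

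Your plan has two genuine gaps, and they are fatal. First, you propose to establish $I_{(0,1)^\infty}\equiv 1$ on a uniform neighborhood of $\tfrac12$ by ``revisiting the proof of \cref{thm:main-halfmeasure} to verify that the constants $\eps_d$ produced there are bounded below uniformly in $d$.'' But the proof of \cref{thm:main-halfmeasure} is a compactness argument in a \emph{fixed} dimension $d$; it invokes Allard's regularity theorem and \cite{EdelenLi2022} to upgrade $L^1$ convergence to graphical convergence of the free boundaries, and none of this yields an explicit, let alone dimension-free, value of $\eps_d$. Whether $\inf_d\eps_d>0$ is precisely Open Question 1 of the paper, so this step of your argument presupposes an unproven (and possibly hard) claim. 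Second --- and you correctly diagnose this yourself --- even granting a uniform $\eps_\infty>0$, the chord/concave-envelope propagation cannot reach the endpoints: for any $\lambda_0>0$, the chord from $(0,0)$ to $(\lambda_0,1)$ gives the linear bound $\lambda/\lambda_0$, while $\sqrt{2\pi}I_\gamma(\lambda)\sim\sqrt{2\pi}\lambda\sqrt{-2\log\lambda}$ is superlinear near $0$, so the chord is \emph{below} $\sqrt{2\pi}I_\gamma$ on some $(0,\lambda_t)$ no matter how close $\lambda_0$ is pushed to $0$; moreover, on that interval the concave envelope of $\max(\sqrt{2\pi}I_\gamma,\,\text{chord})$ is just $\sqrt{2\pi}I_\gamma$ itself (being concave), so iterating the envelope construction gains nothing near $0$. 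Your remaining fallback --- ``a quantitative refinement of the Barthe--Maurey inequality that is strict for $\lambda\ne\tfrac12$'' --- is essentially a restatement of the theorem to be proved.

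The paper takes a fundamentally different, and direct, route: it does not pass through \cref{thm:main-halfmeasure} at all. It pulls the problem back to Gaussian space via $\Phi_d$ and writes the excess $\frac1{\sqrt{2\pi}}I_{(0,1)^d}(\lambda)-I_\gamma(\lambda)$ as the sum of the Gaussian isoperimetric deficit $\delta=\Per_{\gamma_d}(F)-I_\gamma(\lambda)$ and a nonnegative boundary integral penalizing normals with $\nu_i x_i\ne 0$. The key input is the dimension-free quantitative stability of the Gaussian isoperimetric inequality \cite{BarchiesiBrancoliniJulin2017}: if $\delta$ is small, $F$ is quantitatively close (in symmetric difference and in normal) to a half-space $H$ at distance $\ell=\abs{\Phi^{-1}(\lambda)}\ne 0$ from the origin, and then \cref{lem:gaussian-boundary-stability,lem:lowerbound-integral-generic} force the penalty integral to be bounded below by a constant depending only on $\lambda$. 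This gives the gap uniformly in $d$ and for every $\lambda\in(0,1)\setminus\{\tfrac12\}$ in one stroke, with no special handling of the endpoints.
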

The proof of \cref{thm:main-highdim} is quantitative (i.e., no compactness is used) and thus one could keep track of all the constants and dependences on $\lambda$ and find an explicit function $g:[0,1]\to[0,\infty)$ (strictly positive on $(0,1)\setminus\{\tfrac12\}$ with $0=g(0)=g(\tfrac12)=g(1)$) such that 
\begin{equation*}
    I_{(0,1)^d}(\lambda) \ge \sqrt{2\pi}I_{\gamma}(\lambda) + g(\lambda) \text{ for all $0\le \lambda\le 1$.}
\end{equation*}
We decided not to do this because it would make the proof more cumbersome and the resulting function $g$ would not be optimal in any sense.

Let us remark that \cref{thm:main-highdim} may also be interpreted as a dimension-free stability result for the isoperimetric inequality~\cref{eq:tmp994}.

\subsection{Open questions}
The results of this paper naturally raise some further questions that we collect here.

\begin{question}
    Does the statement of \cref{thm:main-halfmeasure} hold also with an $\eps$ independent of the dimension? Equivalently, is there an $\eps>0$ so that $I_{(0,1)^d}(\lambda)=1$ for all $\lambda\in[\tfrac12-\eps, \tfrac12+\eps]$ and for all $d\ge 1$?
\end{question}

\begin{question}
    Is it true that for any $\lambda\in [0,1]$, the sequence $(I_{(0,1)^d}(\lambda))_{d\ge 1}$ is eventually constant? Equivalently, for each $\lambda$, does it hold that $I_{(0,1)^d}(\lambda) = I_{(0,1)^\infty}(\lambda)$ for all $d$ sufficiently large (see \cref{thm:main-highdim} for the definition of $I_{(0,1)^{\infty}}$)?
\end{question}

\begin{question}
    Is there an explicit formula for the limiting isoperimetric profile $I_{(0,1)^\infty}$?
\end{question}

\subsection{Methods and organization of the paper}
The foundation of the proofs of the two main results of this paper (namely \cref{thm:main-halfmeasure,thm:main-highdim}) is the rigidity of the inequality~\cref{eq:tmp994}, i.e., if $\Per(E, (0,1)^d) = \sqrt{2\pi} I_\gamma(\abs{E})$ then $E$ is a half-cube. We state and prove this rigidity in \cref{sec:gaussian-cube-rigidity}.

Then, in \cref{sec:proof-halfmeasure} we prove \cref{thm:main-halfmeasure}. The proof is by compactness and uses crucially a recent result~\cite{EdelenLi2022} about the convergence of free boundary minimal surfaces in the case of non-smooth convex domains.

Finally, in \cref{sec:proof-highdim} we show \cref{thm:main-highdim}. The main idea is to reduce the relative isoperimetric problem in $(0,1)^d$ to the following penalized isoperimetric problem in the Gaussian space $(\R^d,\gamma_d)$:
\begin{equation*}
    \inf_{F\subseteq\R^d:\, \gamma_d(F)=\lambda} \Per_{\gamma_d}(F) + \int_{\partial F}\sqrt{\sum_{i=1}^d (\nu_{\partial F})_i^2\exp(x_i^2)} - 1\,d\Haus^{d-1}_{\gamma_d}(x),
\end{equation*}
where $\nu_{\partial F}$ denotes the unit normal to the boundary of $F$ (see \cref{subsec:perimeter} for the definitions of $\gamma_d, \Haus^{d-1}_{\gamma_d}, \Per_{\gamma_d}$).
Then, by using the dimension-free stability of the Gaussian isoperimetric inequality (see \cite{MosselNeeman2015,Eldan2015,BarchiesiBrancoliniJulin2017}), we prove that the two terms of the penalized problem cannot be simultaneously minimized unless $\lambda\in\{0,\tfrac12,1\}$. The result follows.

Let us remark that (even though our proof does not employ this perspective) the statement of \cref{thm:main-halfmeasure} can be interpreted as the fact that for volumes close to $\tfrac12$ the penalized problem is solved by affine half-spaces. This is not the first instance of penalization of the Gaussian isoperimetric problem that preserves the optimality of half-spaces~\cite{BarchiesiJulin2020}.

\subsection*{Acknowledgements}
The author is thankful to S. Aryan for mentioning the problem to him, and to G. Antonelli for many fruitful discussions about the problem and for suggesting various references.
The author is supported by the National Science Foundation under Grant No. DMS--1926686.

\section{Notation and preliminaries}
\subsection{Gaussian Measure}
Let $\varphi:\R\cup\{\pm\infty\}\to (0,\tfrac{1}{\sqrt{2\pi}}]$ be the function $\varphi(t)\defeq \frac1{\sqrt{2\pi}}\exp(-\tfrac12 t^2)$. Denote with $\gamma\in\prob(\R)$ the Gaussian (probability) measure on $\R$, i.e., the measure with density $\varphi$.

Let us define the $d$-dimensional versions of $\varphi$ and $\gamma$ as follows. For any $d\ge 1$, let $\varphi_d:\R^d\to(0,(2\pi)^{-d/2}]$ be
\begin{equation*}
    \varphi_d(x) \defeq \frac1{(2\pi)^{d/2}}\exp\big( -\tfrac12 \abs{x}^2\big)
    = \varphi(x_1)\varphi(x_2)\cdots \varphi(x_d).
\end{equation*}
Let $\gamma_d\in\prob(\R^d)$ be the $d$-dimensional Gaussian (probability) measure, i.e., the measure with density $\varphi_d$ or equivalently $\gamma_d = \gamma\otimes \gamma \otimes\cdots\otimes\gamma$ where we are taking the product of $d$ copies of $\gamma$.

\subsection{Hausdorff Measure and Perimeter}\label{subsec:perimeter}
We denote with $\abs{\emptyparam}$ the Lebesgue measure in the Euclidean space (of any dimension).
We denote with $\Haus^k$ the $k$-dimensional Hausdorff measure in the Euclidean space (of any dimension).

For a set $E\subseteq\R^d$ of finite perimeter (for the theory of sets of finite perimeter we suggest the reader to consult \cite{Maggi2012}), we denote with $\partial^*E$ its reduced boundary~\cite[Chapter 15]{Maggi2012} (which coincides with the topological boundary if $E$ is sufficiently regular). Let us recall that the reduced boundary is a $(d-1)$-rectifiable set and thus admits a normal vector $\Haus^{d-1}$-almost everywhere. The perimeter of $E$ in an open set $\Omega$ is defined as\footnote{Since $E$ is a set of finite perimeter, its indicator function $\mathds{1}_E$ is a function of bounded variation and thus its distributional derivative is a measure.}
\begin{equation*}
    \Per(E, \Omega) \defeq \norm{D{\mathds 1}_E}(\Omega) = \Haus^{d-1}(\partial^*E\cap\Omega).
\end{equation*}

Let us now give the analogous definitions in the Gaussian setting. 
Let us denote with $\Haus^{k}_{\gamma_d}\defeq \varphi_d\Haus^k$ the $k$-dimensional Hausdorff measure in $\R^d$ weighted by $\varphi_d$.
For $E\subseteq\R^d$ a set of \emph{locally} finite perimeter, its Gaussian perimeter is defined as
\begin{equation*}
    \Per_{\gamma_d}(E) = \Haus^{d-1}_{\gamma_d}(\partial^*E) = \int_{\partial^*E} \varphi_d\,d\Haus^{d-1}.
\end{equation*}

\subsection{The Gaussian Isoperimetric Inequality}\label{subsec:gaussian-isop}
Let $\Phi:\R\cup\{\pm\infty\}\to[0,1]$ be the function $\Phi(t)\defeq \int_{-\infty}^t \varphi(s)\,ds = \gamma((\infty, t))$ and let $I_{\gamma}:[0,1]\to[0,1]$ be $I_\gamma = \varphi\circ \Phi^{-1}$. The function $I_\gamma$ is the isoperimetric profile for the Gaussian space in any dimension, that is, for any positive integer $d\ge 1$ and for any set $E\subseteq\R^d$ of finite perimeter, we have (see \cite{SudakovCirelson1974,Borell1975}, and \cite{Bobkov1997,CarlenKerce2001} for the equality cases)
\begin{equation*}
    \Per_{\gamma_d}(E) \ge I_\gamma(\gamma_d(E))
\end{equation*}
and the equality holds if and only if $E$ is an affine half-space.

\section{Rigidity of half-cubes}\label{sec:gaussian-cube-rigidity}
In this section we study the equality cases of \cref{eq:tmp994}. We show that if a set $E\subseteq (0,1)^d$ satisfies $\Per(E, (0,1)^d)=\sqrt{2\pi}I_\gamma(\abs{E})$ then $\abs{E}\in\{0,\tfrac12,1\}$ and if $\abs{E}=\tfrac12$ then $E$ is a half-cube.

For the proof, we will need the following simple lemma. This formula for the Jacobian of the restriction to a hyperplane is likely well known, but we could not find any reference, so we report it here.

\begin{lemma}\label{lem:jacobian-restriction}
    Fix $d \ge 2$. 
    Let $A\in GL(d, \R)$ be a linear transformation and let $\nu\in\R^d$ be a unit vector.
    The Jacobian determinant of the restriction of $A$ to the subspace orthogonal to $\nu$ is
    $\abs{\det(A)}\cdot \abs{(A^\intercal)^{-1}\nu}$.
\end{lemma}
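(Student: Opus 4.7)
My plan is to compute the Jacobian by a volume ratio argument, viewing it as the ratio between the $(d-1)$-volume of the image of a $(d-1)$-cube in $\nu^\perp$ and the $d$-volume of the image of a unit $d$-cube in $\R^d$, divided by an appropriate height.

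Concretely, I would pick an orthonormal basis $e_1,\ldots,e_{d-1}$ of $\nu^\perp$, so that $e_1,\ldots,e_{d-1},\nu$ is an orthonormal basis of $\R^d$. Then the Jacobian $J$ of $A|_{\nu^\perp}$ is, by definition, the $(d-1)$-dimensional volume of the parallelepiped spanned by $Ae_1,\ldots,Ae_{d-1}$. On the other hand, the $d$-dimensional parallelepiped spanned by $Ae_1,\ldots,Ae_{d-1},A\nu$ has volume $|\det A|$, and it also equals $J$ times the height $h$ of $A\nu$ above the hyperplane $A(\nu^\perp)$. Thus $J = |\det A|/h$, and the whole problem reduces to identifying $h$ with $1/|(A^\intercal)^{-1}\nu|$.

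For this last step, I would write the unit normal $w$ to the hyperplane $A(\nu^\perp)$ as follows. The condition $w \perp A v$ for every $v\in\nu^\perp$ says $A^\intercal w \perp \nu^\perp$, i.e.\ $A^\intercal w$ is parallel to $\nu$. This forces $w = \pm (A^\intercal)^{-1}\nu / |(A^\intercal)^{-1}\nu|$, and then
\begin{equation*}
    h = |\langle w, A\nu\rangle| = |\langle A^\intercal w, \nu\rangle| = \frac{|\langle \nu,\nu\rangle|}{|(A^\intercal)^{-1}\nu|} = \frac{1}{|(A^\intercal)^{-1}\nu|}.
\end{equation*}
Plugging back into $J = |\det A|/h$ gives the claimed formula.

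There is essentially no obstacle: the argument is a one-line volume decomposition plus the identification of the normal to $A(\nu^\perp)$ via transposition. The only mild care needed is invoking $A\in GL(d,\R)$ to ensure $(A^\intercal)^{-1}\nu$ is well defined and nonzero, so that $h>0$ and the formula is meaningful. An alternative, equally short, presentation would use the exterior-algebra identity $A^*(e_1\wedge\cdots\wedge e_{d-1}) = (\det A)\, A^{-1}(e_1\wedge\cdots\wedge e_{d-1})$ together with Hodge duality, but the geometric computation above seems the most transparent to include in the paper.
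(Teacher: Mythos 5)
Your proof is correct and takes essentially the same route as the paper's: both arguments factor a $d$-dimensional volume (of a prism/parallelepiped over a base in $\nu^\perp$) as a $(d-1)$-dimensional base volume times a height, and both identify $(A^\intercal)^{-1}\nu$ as the normal direction to $A(\nu^\perp)$ in order to compute that height as $1/\abs{(A^\intercal)^{-1}\nu}$. The only difference is presentational --- the paper runs the volume decomposition through Fubini on an arbitrary Borel set $S\subseteq\nu^\perp$, whereas you specialize to the unit parallelepiped in an orthonormal basis, which is a perfectly adequate simplification here.
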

\begin{proof}
    Take a Borel set $S\subseteq \nu^{\perp}$. By Fubini's Theorem, we have
    \begin{equation}\label{eq:tmp082}
        \abs*{S+\{t\nu: 0<t<1\}} = \Haus^{d-1}(S).
    \end{equation}
    Moreover, 
    \begin{equation}\label{eq:tmp083}
        \abs*{A(S+\{t\nu: 0<t<1\})} = \abs{\det(A)} \cdot \abs*{S+\{t\nu: 0<t<1\}} .
    \end{equation}
    Write $A\nu = u + \tilde u$, where $u$ is the orthogonal projection of $A\nu$ on the hyperspace $A(\nu^\perp)$. Notice that, for all $x\in\R^d$, $\langle (A^\intercal)^{-1}\nu, Ax\rangle = \langle \nu, x\rangle$, so $(A^\intercal)^{-1}\nu$ is orthogonal to the hyperspace $A(\nu^\perp)$. In particular, $\tilde u$ is a multiple of $(A^\intercal)^{-1}\nu$ . Hence, we have 
    \begin{equation}\label{eq:tmp084}
        \abs{\tilde u} = \frac{\abs{\langle A\nu, (A^\intercal)^{-1}\nu\rangle}}{\abs{(A^\intercal)^{-1}\nu}} = \frac1{\abs{(A^\intercal)^{-1}\nu}}.
    \end{equation}
    Thanks to Fubini's Theorem, we get
    \begin{equation}\label{eq:tmp085}
        \abs*{A(S+\{t\nu: 0<t<1\})}
        =
        \abs*{A(S)+\{tu+t\tilde u: 0<t<1\})}
        = \Haus^{d-1}(A(S))\abs{\tilde u}
    \end{equation}
    Combining \cref{eq:tmp082,eq:tmp083,eq:tmp084,eq:tmp085}, we obtain $\Haus^{d-1}(A(S)) = \abs{\det(A)}\cdot\abs{(A^\intercal)^{-1}\nu}\Haus^{d-1}(S)$ which is equivalent to the desired statement.
\end{proof}

\begin{proposition}[Rigidity for the Gaussian isoperimetric inequality in the cube]\label{prop:gaussian-cube-rigidity}
    For $E\subseteq(0,1)^d$ a set of finite perimeter, it holds that
    \begin{equation*}
        \Per(E, (0, 1)^d) \ge \sqrt{2\pi}I_\gamma(\abs{E}).
    \end{equation*}
    This inequality is an equality if and only if $E=\emptyset$ or $E=(0,1)^d$ or $E=\{x\in(0,1)^d:\, \hat v\cdot x \le \tfrac12\}$ for some $\hat v\in\{\pm e_1, \pm e_2, \dots, \pm e_d\}$.
\end{proposition}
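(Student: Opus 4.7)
The plan is to extract the equality cases by retracing the transport-based proof of the Barthe--Maurey inequality \cref{eq:tmp994}. I would set up the diagonal transport map $T:(0,1)^d\to\R^d$ defined coordinate-wise by $T(x)=(\Phi^{-1}(x_1),\dots,\Phi^{-1}(x_d))$; this is a diffeomorphism pushing the Lebesgue measure on $(0,1)^d$ forward to $\gamma_d$, so in particular $\gamma_d(T(E))=\abs{E}$. Since $DT(x)$ is diagonal with entries $1/\varphi(\Phi^{-1}(x_i))$, applying \cref{lem:jacobian-restriction} and changing variables in the Gaussian surface integral gives
\begin{equation*}
    \Per_{\gamma_d}(T(E))=\int_{\partial^*E}\sqrt{\sum_{i=1}^d \varphi(\Phi^{-1}(x_i))^2\,\nu_i(x)^2}\,d\Haus^{d-1}(x),
\end{equation*}
where $\nu$ is the measure-theoretic unit outer normal of $E$. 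Since $\varphi(\Phi^{-1}(s))\le\frac{1}{\sqrt{2\pi}}$ with equality iff $s=\tfrac12$, the integrand is pointwise at most $\frac{1}{\sqrt{2\pi}}\abs{\nu}=\frac{1}{\sqrt{2\pi}}$; together with the Gaussian isoperimetric inequality this yields the chain $\sqrt{2\pi}\,I_\gamma(\abs{E})\le\sqrt{2\pi}\,\Per_{\gamma_d}(T(E))\le\Per(E,(0,1)^d)$, reproving the inequality.

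For the equality case, I would observe that equality in this chain forces simultaneously that $T(E)$ is an affine half-space $\{y\in\R^d:a\cdot y\le b\}$, by the rigidity of the Gaussian isoperimetric inequality recalled in \cref{subsec:gaussian-isop}, and that the pointwise bound is saturated at $\Haus^{d-1}$-a.e.\ $x\in\partial^*E$, which means that at each such $x$ every coordinate $x_i$ with $\nu_i(x)\ne 0$ equals $\tfrac12$. Pulling back the half-space condition gives $\partial^*E=\{x\in(0,1)^d:\sum_i a_i\Phi^{-1}(x_i)=b\}$, and a direct gradient computation shows that the normal at $x\in\partial^*E$ is proportional to $(a_i/\varphi(\Phi^{-1}(x_i)))_i$; in particular, the indices $i$ with $\nu_i(x)\ne 0$ are exactly those with $a_i\ne 0$.

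The proof then concludes by a case analysis on the support of $a$. If $a\equiv 0$ then $T(E)\in\{\emptyset,\R^d\}$, giving the trivial equality cases $E=\emptyset$ and $E=(0,1)^d$. If exactly one coordinate $a_j$ is nonzero, then the coordinate condition forces $x_j=\tfrac12$ on $\partial^*E$, which combined with the hyperplane equation yields $b=0$ and $E=\{x\in(0,1)^d:x_j\le\tfrac12\}$ or its complement, depending on the sign of $a_j$, matching the axis-aligned bisecting half-cubes listed in the statement. The main obstacle I anticipate is ruling out the case where at least two $a_i$'s are nonzero: the coordinate condition would then force the $(d-1)$-dimensional reduced boundary $\partial^*E$ to be contained in the slice $\{x\in(0,1)^d:x_i=\tfrac12\text{ for every }i\text{ with }a_i\ne 0\}$, which has codimension at least two and hence zero $\Haus^{d-1}$-measure; but then $\Per(E,(0,1)^d)=0$, contradicting the fact that $T(E)$ is a nontrivial affine half-space and therefore has strictly positive Gaussian perimeter. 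This exhausts all possibilities and completes the classification.
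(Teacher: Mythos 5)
Your proof is correct and follows essentially the same route as the paper: a coordinate-wise transport between $(0,1)^d$ and the Gaussian space, \cref{lem:jacobian-restriction} plus the area formula to compare perimeters, the Gaussian isoperimetric inequality with its rigidity, and finally the pointwise saturation of the Jacobian bound to pin down the axis-aligned bisecting half-cube. The only difference is cosmetic — you work with $T=\Phi_d^{-1}$ pushing Lebesgue to $\gamma_d$ and analyze the normal on $\partial^*E$, whereas the paper uses $\Phi_d$ and analyzes the normal on $\partial^*(\Phi_d^{-1}(E))$ — and your explicit treatment of the case where two or more $a_i\ne 0$ (codimension $\ge 2$, hence zero perimeter) is a valid, slightly more detailed way of reaching the same contradiction the paper disposes of implicitly.
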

\begin{proof}
    We follow the proof of \cite[Theorem 7]{Ros2005}.

    Let $\Phi_d:\R^d\to (0,1)^d$ be the map (see \cref{subsec:gaussian-isop} for the definition of $\Phi$)
    \begin{equation*}
        \Phi_d(x_1, x_2, \dots, x_d) \defeq (\Phi(x_1), \Phi(x_2), \dots, \Phi(x_d)).
    \end{equation*}
    Notice that $\varphi_d$ is the density of the Gaussian measure on $\R^d$ and that $\Phi_d$ is a diffeomorphism such that $(\Phi_d)_*(\gamma_d) = \restricts{\Leb^d}{(0,1)^d}$, in particular the Jacobian of $\Phi_d$ satisfies $\abs{\det D\Phi_d} = \varphi_d(x)$.

    For a finite perimeter set $E\subseteq (0, 1)^d$, the area formula \cite[Theorem 3.2.3]{Federer1969} combined with \cref{lem:jacobian-restriction} tells us that
    \begin{equation}\label{eq:tmp55}
        \Per(E, (0, 1)^d) = \int_{\partial^*(\Phi_d^{-1}(E))} 
        \abs{\det D\Phi_d} \cdot
        \abs{\big((D\Phi_d)^\intercal\big)^{-1}\nu} d\Haus^{d-1},
    \end{equation}
    where $\nu$ denotes the normal to the reduced boundary $\partial^*(\Phi_d^{-1}(E))$. We have that
    \begin{equation}\label{eq:tmp56}
        \abs{\big((D\Phi_d)^\intercal\big)^{-1}\nu} =
        \sqrt{2\pi}\sqrt{\sum_{i=1}^d \nu_i^2 e^{x_i^2}} \ge \sqrt{2\pi}
    \end{equation}
    and such inequality holds as an equality if and only if, for all $i= 1, 2,\dots, d$, we have $\nu_ix_i=0$. 
    Combining \cref{eq:tmp55,eq:tmp56}, we obtain
    \begin{equation}\label{eq:tmp57}
        \frac1{\sqrt{2\pi}} \Per(E, (0, 1)^d) \ge \int_{\partial^*(\Phi_d^{-1}(E))} \varphi_d \,d\Haus^{d-1} = \Per_{\gamma_d}(\Phi_d^{-1}(E)),
    \end{equation}
    and equality holds if and only if for $\Haus^{d-1}$-almost every point $x\in \partial^*(\Phi_d^{-1}(E))$, we have $\nu_i(x)x_i = 0$ for all $i=1,2,\dots, d$.
    The Gaussian isoperimetric inequality (see \cref{subsec:gaussian-isop}) tells us that
    \begin{equation}\label{eq:tmp58}
        \Per_{\gamma_d}(\Phi_d^{-1}(E)) \ge I_\gamma\big(\gamma_d(\Phi_d^{-1}(E))\big)
        = I_\gamma(\abs{E}),
    \end{equation}
    and equality holds if and only if $\Phi_d^{-1}(E)$ is an affine half-space (in particular, the normal $\nu$ to its boundary is constant) or it is the empty set or it is the whole $\R^d$.
    
    Combining \cref{eq:tmp57,eq:tmp58} we obtain the desired inequality. If the inequality of the statement is an equality, then in particular both \cref{eq:tmp57} and \cref{eq:tmp58} must be equalities. So, either $E=\emptyset$ or $E=(0,1)^d$ or $\Phi_d^{-1}(E)$ is an affine half-space and the normal $\nu$ to its boundary satisfies $\nu_i x_i=0$ for all $x\in\partial(\Phi_d^{-1}(E))$ and all $i=1,2,\dots, d$. In the latter case, take $1\le j\le d$ such that $\nu_j\not=0$. Then $x_j=0$ for all $x\in \partial(\Phi_d^{-1}(E))$ and thus $\Phi_d^{-1}(E) = \{x\in\R^d:\, x\cdot \hat v \le 0\}$ with $\hat v=e_j$ or $\hat v=-e_j$. The sought characterization for $E$ follows. 
\end{proof}

\section{Proof of \texorpdfstring{\cref{thm:main-halfmeasure}}{Theorem 1.1}}\label{sec:proof-halfmeasure}
The proof of \cref{thm:main-halfmeasure} is based on a compactness argument.
The main idea is that a sequence of perimeter-minimizing sets $E_n\subseteq (0,1)^d$ with $\abs{E_n}\to\frac12$ converges in a very strong sense to a minimizer with measure $\frac12$, and such minimizer must be a half-cube thanks to \cref{prop:gaussian-cube-rigidity}.

\begin{proof}[Proof of \cref{thm:main-halfmeasure}]
    Let $(E_n)_{n\in\N}\subseteq (0, 1)^d$ be a sequence of sets of finite perimeter such that
    \begin{itemize}
        \item $\abs{E_n} \to \frac12$ as $n\to\infty$.
        \item The set $E_n$ minimizes $\Per(E_n, (0,1)^d)$ among the sets with measure equal to $\abs{E_n}$.
    \end{itemize}
    The existence of $E_n$ is standard \cite[Proposition 12.30]{Maggi2012}. By \cite[Theorem 17.20]{Maggi2012}, we know that its reduced boundary $\partial^* E_n\cap (0,1)^d$ is a free-boundary integral rectifiable varifold in $(0,1)^d$ with constant mean curvature.

    We will prove that, for $n$ sufficiently large, $E_n$ coincides (up to negligible sets) with $\{x\in (0, 1)^d: x \cdot \hat v \le \abs{E_n}\}$ for some $\hat v \in \{\pm e_1, \pm e_2, \dots, \pm e_d\}$. The desired statement follows immediately.

    Let us show that the mean curvature of $E_n$ goes to $0$ as $n\to\infty$.
    The isoperimetric profile $I_{(0,1)^d}$ is concave \cite[Corollary 6.11]{Milman2009} and satisfies (see \cref{prop:gaussian-cube-rigidity})
    \begin{equation*}
        \sqrt{2\pi}I_\gamma(\lambda) \le I_{(0,1)^d}(\lambda) \le 1
    \end{equation*}
    for all $0\le \lambda\le 1$.
    Notice that the lower bound and the upper bound for $I_{(0,1)^d}$ are both smooth concave functions, they have the same value at $\lambda=\tfrac12$, and the derivatives at $\lambda=\tfrac12$ are equal to $0$. 
    Since $I_{(0,1)^d}$ is trapped between two such functions, it follows that $\partial I_{(0,1)^d}(\lambda)\to \{0\}$ as $\lambda\to \tfrac12$, where $\partial f$ denotes the superdifferential\footnote{The superdifferential $\partial f(\lambda)$ of a concave function $f$ at a point $\lambda$ is the set of slopes $v\in\R$ so that $f(\lambda+t)\le f(\lambda) + vt$ for all $t\in\R$ so that $\lambda+t$ belongs to the domain of $f$.} of the concave function $f$.    
    Since $E_n$ is a minimizer for the relative isoperimetric inequality, its mean curvature belongs to $\partial I_{(0,1)^d}(\abs{E_n})$ as proven in \cite[Proposition 4.8]{RitoreVernadakis2015} (see also \cite[Corollary 2.9.]{SternbergZumbrun1999} for the case of ambient spaces with smooth boundary) and therefore we deduce that the mean curvature of $E_n$ goes to $0$ as $n\to\infty$.

    By compactness~\cite[Theorem 12.26]{Maggi2012}, up to taking a subsequence, we may assume that $E_n$ converges to $E_\infty$ in the sense that ${\mathds 1}_{E_n}\to {\mathds 1}_{E_\infty}$ in $L^1$ and $D{\mathds 1}_{E_n}\overset{\ast}{\rightharpoonup} D{\mathds 1}_{E_\infty}$ in the open set $(0,1)^d$.
    Notice that $\Per(E_n, (0,1)^d)\le 1$ and therefore, by lower semicontinuity of the perimeter, we have $\Per(E_\infty, (0, 1)^d) \le 1$. Moreover $\abs{E_\infty} = \lim \abs{E_n} = \frac12$. 
    Thus, by \cref{prop:gaussian-cube-rigidity}, we obtain that, without loss of generality, $E_\infty=\{x\in (0,1)^d:\, x_d \le \frac12\}$.
    In particular, we have $\Per(E_n, (0, 1)^d) \to \Per(E_\infty, (0, 1)^d)$ and thus, applying \cite[Theorem 6.4]{Allard1972}, the boundaries $\partial^*E_n$ converge to $\partial^*E_\infty$ in the varifold sense.
    
    We have verified all the assumptions necessary to apply \cite[Regularity Theorem]{Allard1972} in the interior and \cite[Theorem 1.1]{EdelenLi2022} at the boundary, thus we have that, for $n$ sufficiently large, $\partial^* E_n\cap(0,1)^d$ is a graph over $\partial^* E_\infty\cap(0,1)^d = \{x\in (0,1)^d:\, x_d = \frac12\}$.
    Since $\partial^*E_\infty\cap(0,1)^d$ is flat, it follows in particular that 
    \begin{equation*}
        \Per(E_n, (0,1)^d)
        =
        \Haus^{d-1}(\partial^*E_n\cap (0,1)^d)
        \ge
        \Haus^{d-1}(\partial^*E_\infty\cap (0,1)^d)
        =1 ,
    \end{equation*}
    with equality if and only if $\partial^*E_n$ is the graph of a \emph{constant} function over $\partial^* E_\infty$, which is exactly the desired statement.
\end{proof}

\section{Proof of \texorpdfstring{\cref{thm:main-highdim}}{Theorem 1.2}}\label{sec:proof-highdim}

We will need two simple technical lemmas. 
Let us emphasize that the theme of this whole section is obtaining estimates that do not depend on the dimension $d$.

\begin{lemma}\label{lem:gaussian-boundary-stability}
    Let $F\subseteq\R^d$ be a set of locally finite perimeter and let $H\subseteq\R^d$ be an affine half-space. Let $\ell\defeq \dist(0_{\R^d}, \partial H)$ (observe that $\Haus^{d-1}_{\gamma_d}(\partial H)=\varphi(\ell)$) and let $\pi_{\partial H}:\R^d\to\partial H$ be the projection on the hyperplane $\partial H$.
    For any positive real number $r>0$, we have
    \begin{equation*}
        \Haus^{d-1}_{\gamma_d}\Big(\pi_{\partial H}\big(
            \partial^*F\cap \{x:\,\dist(x, \partial H) < r\}
        \big)\Big) 
        \ge 
        \varphi(\ell) - \frac{\varphi(\ell)}{\varphi(\ell+r)} 
        \frac{\gamma_d(F\triangle H)}{r} .
    \end{equation*}
\end{lemma}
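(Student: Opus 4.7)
The plan is a one-dimensional slicing argument along lines perpendicular to $\partial H$. After rotating the normal of $\partial H$ to $e_d$ and possibly flipping, I may assume $\partial H=\{x\in\R^d:\,x_d=\ell\}$ with $\ell=\dist(0,\partial H)\ge 0$, and $H=\{x_d<\ell\}$. Parametrizing $\partial H$ by $y\in\R^{d-1}$ via $y\mapsto(y,\ell)$, the factorization $\varphi_d(y,\ell)=\varphi(\ell)\,\varphi_{d-1}(y)$ yields $\Haus^{d-1}_{\gamma_d}(S)=\varphi(\ell)\,\gamma_{d-1}(S)$ for any Borel $S\subseteq\partial H\cong\R^{d-1}$; in particular $\Haus^{d-1}_{\gamma_d}(\partial H)=\varphi(\ell)$, consistent with the parenthetical in the statement.

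The core step is to bound the Gaussian measure of the complement $B\subseteq\partial H\cong\R^{d-1}$ of $\pi_{\partial H}(\partial^*F\cap\{\dist(\cdot,\partial H)<r\})$. For every $y\in B$ the vertical segment $\{(y,\ell+t):\,t\in(-r,r)\}$ avoids $\partial^*F$, and by the standard slicing theory for $BV$ functions this forces, for $\Leb^{d-1}$-a.e.\ such $y$, the one-dimensional slice $F_y\defeq\{t:\,(y,\ell+t)\in F\}$ to have empty reduced boundary in $(-r,r)$. Hence $F_y\cap(-r,r)$ equals, up to a $\Leb^1$-null set, either $\emptyset$ or $(-r,r)$; comparing with the slice of $H$, which is $(-\infty,0)$, we conclude that $(F\triangle H)_y$ contains one of the half-intervals $(-r,0)$ or $(0,r)$, in particular a set of $\Leb^1$-measure $r$ inside $(-r,r)$.

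Since $\ell\ge 0$ and $\varphi$ is unimodal with maximum at $0$, a short case analysis (separating $\ell\ge r$ from $0\le\ell<r$) gives the uniform pointwise bound $\varphi(\ell+t)\ge\varphi(\ell+r)$ for every $t\in(-r,r)$. Fubini combined with the previous paragraph then yields
\begin{equation*}
    \gamma_d(F\triangle H)\ge\int_B r\,\varphi(\ell+r)\,\varphi_{d-1}(y)\,dy=r\,\varphi(\ell+r)\,\gamma_{d-1}(B).
\end{equation*}
Combined with the identities $\Haus^{d-1}_{\gamma_d}(B)=\varphi(\ell)\,\gamma_{d-1}(B)$ and $\Haus^{d-1}_{\gamma_d}(\pi_{\partial H}(\partial^*F\cap\{\dist<r\}))=\varphi(\ell)-\Haus^{d-1}_{\gamma_d}(B)$, this rearranges precisely to the stated inequality.

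The only non-routine ingredient is the slicing appeal, which upgrades the geometric hypothesis (no reduced-boundary point above $y$) to the measure-theoretic conclusion (the slice is $\Leb^1$-trivial). Everything else is unimodality of $\varphi$ plus Fubini.
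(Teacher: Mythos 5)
Your proof is correct and coincides with the paper's argument: both slice along lines normal to $\partial H$, invoke $BV$ slicing theory (the paper cites \cite[Remark 18.13]{Maggi2012}) to conclude that for a.e.\ base point outside the projection the one-dimensional slice of $F$ is constant on the interval of length $2r$, compare with the corresponding slice of $H$ to find a set of length $r$ in the symmetric difference, and integrate via Fubini using $\varphi(\ell+t)\ge\varphi(\ell+r)$ for $t\in(-r,r)$. The only cosmetic differences are that you rotate $\partial H$ to a coordinate hyperplane while the paper works invariantly with $\nu_{\partial H}$, and your ``short case analysis'' for the unimodality bound is unnecessary since $\ell\ge 0$ and $t>-r$ already give $\abs{\ell+t}\le\ell+r$ with no cases.
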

\begin{proof}
    Let $V\defeq\pi_{\partial H}\big(\partial^*F\cap \{x:\,\dist(x, \partial H) < r\}\big)$.
    Let $\nu_{\partial H}$ be the normal to $\partial H$, oriented so that $\ell\nu_{\partial H}\in \partial H$.
    For each $x\in \partial H$, let us consider the $1$-dimensional slice $F_x \defeq F\cap \{x+t\nu_{\partial H}: t\in\R\}$; define $H_x$ analogously. For $\Haus^{d-1}$-almost every $x\in\partial H$, the set $F_x$ is (locally) made of finitely many disjoint intervals and all the extreme points of such intervals belong to $\partial^* F$ (see \cite[Remark 18.13]{Maggi2012}).

    For $\Haus^{d-1}$-almost every $x\in\partial H\setminus V$, we have\footnote{With $x+\nu_{\partial H}\,(-r,r)$ we denote the set $\{x+t\nu_{\partial H}:\, t\in(-r,r)\}$.} that $F_x\cap (x+\nu_{\partial H}\,(-r,r))$ is either empty or equal to $x+\nu_{\partial H}(-r,r)$ (up to negligible sets) because $F_x$ cannot have any boundary point in the interval $x+\nu_{\partial H}(-r,r)$. In both cases, $\Haus^{1}_{\gamma_d}(F_x\triangle H_x) \ge r\varphi_d(x+r\nu_{\partial H})$.

    Observe that $\varphi_d(x+r\nu_{\partial H})=\frac{\varphi(\ell+r)}{\varphi(\ell)}\varphi_d(x)$.
    By Fubini's Theorem, we get
    \begin{align*}
        \gamma_d(F\triangle H) 
        &\ge
        \int_{\partial H\setminus V} \Haus^1_{\gamma_d}(F_x\triangle H_x) \,d\Haus^{d-1}(x)
        \\
        &\ge
        r 
        \int_{\partial H\setminus V} 
        \varphi_d(x+r\nu_{\partial H})
        \,d\Haus^{d-1}(x)
        =
        r\frac{\varphi(\ell+r)}{\varphi(\ell)}
        \Haus^{d-1}_{\gamma_d}(\partial H\setminus V)
    \end{align*}
    and the desired statement follows.
\end{proof}

We will apply the following lemma only with the function $f(t)=\exp(\frac12t^2)$ and it is possible to prove a sharper result in this case, but we decided to prioritize clarity.
Informally, the following lemma is a quantitative way to state the fact that a hyperplane with distance $\ell$ from the origin cannot be a subset of the strip $\{x\in\R^d:\, \abs{x_i} < \tfrac\ell2\}$.

\begin{lemma}\label{lem:lowerbound-integral-generic}
    For any $\ell>0$ there is a constant $c=c(\ell)>0$ such that the following statement holds.

    Let $\Sigma\subseteq\R^d$ be an affine hyperplane with $\dist(0_{\R^d}, \Sigma)=\ell$ (which is equivalent to $\Haus^{d-1}_{\gamma_d}(\Sigma) = \varphi(\ell)$). For any Borel subset $V\subseteq \Sigma$ and any nondecreasing function $f:[0,\infty)\to [0,\infty)$, we have
    \begin{equation*}
        \int_{V} f(\abs{x_i})\,d\Haus^{d-1}_{\gamma_d}(x) 
        \ge 
        \big(c - \Haus^{d-1}_{\gamma_d}(\Sigma\setminus V)\big)f(\tfrac{\ell}{2})
    \end{equation*}
    for any $i\in\{1, 2,\dots, d\}$.
\end{lemma}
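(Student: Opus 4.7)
The plan is to exploit the monotonicity of $f$ to replace the integrand by the simpler step function $f(\ell/2)\mathds{1}_W$, where $W\defeq\{x\in\Sigma:\,\abs{x_i}\ge \ell/2\}$, and then to prove a lower bound $\Haus^{d-1}_{\gamma_d}(W)\ge c(\ell)$ with a constant that does not depend on $d$, on $\Sigma$, or on $i$. Since $f$ is nondecreasing, the chain
\begin{equation*}
    \int_{V} f(\abs{x_i})\,d\Haus^{d-1}_{\gamma_d}(x) \ge f(\tfrac{\ell}{2})\,\Haus^{d-1}_{\gamma_d}(V\cap W) \ge f(\tfrac{\ell}{2})\big(\Haus^{d-1}_{\gamma_d}(W) - \Haus^{d-1}_{\gamma_d}(\Sigma\setminus V)\big)
\end{equation*}
is immediate, so such a lower bound yields the conclusion with the same constant $c=c(\ell)$.

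To estimate $\Haus^{d-1}_{\gamma_d}(W)$, I parametrize $\Sigma=\ell\nu+\nu^\perp$, where $\nu\in\R^d$ is the unit normal pointing from the origin to $\Sigma$. Using $\abs{x}^2=\ell^2+\abs{y}^2$ for $x=\ell\nu+y$ with $y\in\nu^\perp$, a direct Fubini computation shows that the pushforward of $\restricts{\Haus^{d-1}_{\gamma_d}}{\Sigma}$ under the map $x\mapsto x-\ell\nu$ is exactly $\varphi(\ell)$ times the standard Gaussian probability measure on $\nu^\perp$. Consequently, under the normalized probability measure $\mu_\Sigma\defeq \varphi(\ell)^{-1}\restricts{\Haus^{d-1}_{\gamma_d}}{\Sigma}$, the coordinate $x_i$ may be written as $\ell\nu_i+Y_i$ with $Y$ a standard Gaussian on $\nu^\perp$; since the projection of $e_i$ onto $\nu^\perp$ has squared norm $1-\nu_i^2$, this means $x_i$ has Gaussian distribution with mean $\ell\nu_i$ and variance $1-\nu_i^2$. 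Crucially, this distribution depends on $d$ and on the orientation of $\Sigma$ only through the single scalar $\nu_i\in[-1,1]$.

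It therefore suffices to bound $\mu_\Sigma(W)$ from below uniformly in $\nu_i\in[-1,1]$, which I do by a two-case split. If $\abs{\nu_i}\ge\tfrac12$, the mean $\ell\nu_i$ has magnitude at least $\ell/2$, so by the symmetry of the Gaussian about its mean $\mu_\Sigma(W)\ge\tfrac12$. If $\abs{\nu_i}<\tfrac12$, the standard deviation exceeds $\sqrt{3}/2$ and the mean has magnitude less than $\ell/2$; assuming without loss of generality that the mean is nonnegative, the standardized threshold $(\ell/2-\ell\nu_i)/\sqrt{1-\nu_i^2}$ is at most $\ell/\sqrt{3}$, so the standard Gaussian tail gives $\mu_\Sigma(W)\ge 1-\Phi(\ell/\sqrt{3})>0$. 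Setting $c(\ell)\defeq \varphi(\ell)\min\big(\tfrac12,\,1-\Phi(\ell/\sqrt{3})\big)$ completes the proof.

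The main obstacle, and the whole point of the reduction, is preserving dimension-independence of the constant. A naive direct integration on $\Sigma$ would risk introducing factors that depend on $d$ or that degenerate as $\Sigma$ becomes nearly parallel or orthogonal to a coordinate axis; the key observation is that projecting to the single coordinate $x_i$ absorbs every high-dimensional and geometric input into the single scalar $\nu_i\in[-1,1]$, so that the remaining analysis reduces to a purely one-dimensional case split.
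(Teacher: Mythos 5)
Your proof is correct and takes essentially the same route as the paper's: both reduce the lower bound to showing $\Haus^{d-1}_{\gamma_d}(\{x\in\Sigma:\abs{x_i}\ge\ell/2\})\ge c(\ell)$, both observe that under the normalized measure on $\Sigma$ the coordinate $x_i$ is a one-dimensional Gaussian with mean $\ell\nu_i$ and variance $1-\nu_i^2$, and both carry out the same two-case split on whether $\abs{\nu_i}$ is $\ge\tfrac12$ or $<\tfrac12$. Your phrasing in terms of the marginal law of $x_i$ is a slightly cleaner packaging, but the underlying computation is identical.
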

\begin{proof}
    Since $f$ is nondecreasing, we have
    \begin{equation*}
    \begin{aligned}
        \int_{V} f(\abs{x_i})\,d\Haus^{d-1}_{\gamma_d}(x)
        &\ge
        \Haus^{d-1}_{\gamma_d}(V\cap \{x:\, \abs{x_i} \ge \tfrac\ell2\}) f(\tfrac\ell2)
        \\
        &\ge
        \big[
            \varphi(\ell) - 
            \Haus^{d-1}_{\gamma_d}(\Sigma\cap \{x:\, \abs{x_i} < \tfrac\ell2\})
            -
            \Haus^{d-1}_{\gamma_d}(\Sigma\setminus V)\big]
        f(\tfrac\ell2) ,
    \end{aligned}
    \end{equation*}
    therefore to prove the statement it is sufficient to show that there exists a constant $c=c(\ell)$ so that
    \begin{equation*}
        \Haus^{d-1}_{\gamma_d}(\Sigma\cap \{x:\, \abs{x_i} < \tfrac\ell2\})
        \le \varphi(\ell)-c.
    \end{equation*}
    Let $\sigma_1, \sigma_2,\dots, \sigma_d\in\R^d$ be an orthonormal basis such that $\Sigma = \ell\sigma_d + \langle \sigma_1, \dots, \sigma_{d-1}\rangle$. In particular, $\sigma_d$ is the unit normal to $\Sigma$.
    Up to rotation, we may assume that $e_i\in\langle \sigma_1, \sigma_d\rangle$, where $e_i$ is the $i$-th element of the standard basis of $\R^d$ (so that $e_i\cdot x=x_i$). Then, it must be $e_i = (\sigma_d)_i\sigma_d + \sqrt{1-(\sigma_d)_i^2}\sigma_1$.
    Using the parametrization of $\Sigma$ given by $\R^{d-1}\ni y \mapsto y_1\sigma_1 + \cdots + y_{d-1}\sigma_{d-1} + \ell\sigma_d$, we get
    \begin{align*}
        \Haus^{d-1}_{\gamma_d}(\Sigma\cap \{x:\, \abs{x_i} < \tfrac\ell2\})
        &=
        \varphi(\ell)
        \gamma_{d-1}\Big(
            \big\{y\in\R^{d-1}:\, 
                \abs{(\sigma_d)_i\ell + \sqrt{1-(\sigma_d)_i^2}y_1} < \tfrac\ell2
            \big\}\Big)
        \\
        &=
        \varphi(\ell)
        \gamma_{1}\Big(
            \big\{t\in\R:\, 
                \abs{(\sigma_d)_i\ell + \sqrt{1-(\sigma_d)_i^2}t} < \tfrac\ell2
            \big\}\Big) .
    \end{align*}
    Let $q\defeq (\sigma_d)_i$. Consider the set appearing at the right-hand side of the last equation. It is a (possibly empty) interval with length $\frac{\ell}{\sqrt{1-q^2}}$ (empty if $\abs{q}=1$) and it contains $0$ if and only if $\abs{q} < \frac12$. So, either it does not contain zero and thus its Gaussian measure is less than $\frac12$ or its length is bounded by $\frac{2\ell}{\sqrt{3}}$ and thus its Gaussian measure is bounded by $1-c(\ell)$ for a suitable constant $c(\ell)>0$. In both cases, the desired statement follows.
\end{proof}

We are now ready to prove \cref{thm:main-highdim}. The proof \emph{lives} in the Gaussian space $(\R^d, \gamma_d)$ instead of the cube $(0,1)^d$ (we move everything there with the same map that appeared in the proof of \cref{prop:gaussian-cube-rigidity}).
Oversimplifying, the idea of the proof is that if the statement were false, then we would find a set $F\subseteq\R^d$ that is \emph{almost} a half-space and minimizes a boundary integral (see \cref{eq:tmp88}) that is not minimized by half-spaces; this yields a contradiction.

\begin{proof}[Proof of \cref{thm:main-highdim}]
    The inequality $I_{(0,1)^{d+1}}\le I_{(0,1)^d}$ follows from the fact that the map $E\mapsto E\times (0,1)$ transforms a subset of $(0,1)^d$ into a subset of $(0,1)^{d+1}$ with the same perimeter and the same measure. The concavity of $I_{(0,1)^\infty}$ follows from the concavity of $I_{(0,1)^d}$~\cite[Corollary 6.11]{Milman2009}.

    For the second part of the statement, fix a dimension $d\ge 1$ and $0<\lambda<1$ different from $\tfrac12$. Let $E\subseteq (0,1)^d$ be a set of finite perimeter such that $\abs{E}=\lambda$ and $\Per(E, (0,1)^d)=I_{(0,1)^d}(\lambda)$ (a set $E$ with these properties exists thanks to \cite[Proposition 12.30]{Maggi2012}). By repeating the proof of \cref{prop:gaussian-cube-rigidity} for the set $E$, we obtain that
    \begin{equation}\label{eq:tmp88}
        \frac1{\sqrt{2\pi}}I_{(0,1)^d}(\lambda) - I_\gamma(\lambda) 
        \ge 
        \Per_{\gamma_d}(F) - I_\gamma(\lambda) +
        \int_{\partial^* F}
        \sqrt{\sum_{i=1}^d (\nu_{\partial^*F})_i^2 e^{x_i^2}}-1 \,d\Haus^{d-1}_{\gamma_d}(x),
    \end{equation}
    where $F=\Phi_d^{-1}(E)$ and $\nu_{\partial^*F}$ is the normal to $\partial^* F$. Notice that $\gamma_d(F)=\lambda$. The intuitive idea is that the two terms at the right-hand side cannot be simultaneously small: $\Per_{\gamma_d}(F) - I_\gamma(\lambda)$ is small if $\partial^* F$ is close to an affine hyperplane, while the integral is strictly positive if $\partial^* F$ is an affine half-space not containing the origin (which is guaranteed by the condition $\abs{E}\not=\frac12$).
    There is a crucial difficulty: all our estimates must be uniform in the dimension $d$, because we want to show that $\frac1{\sqrt{2\pi}}I_{(0,1)^d}(\lambda) - I_\gamma(\lambda)$ is bounded away from $0$ for all $d\ge 1$.

    Let $\delta\defeq \Per_{\gamma_d}(F)-I_\gamma(\lambda)$.
    We are going to prove that there exist two constants $\delta_1(\lambda)$ and $c_1(\lambda)$ (independent of $d$) so that 
    \begin{equation}\label{eq:subtask}
        \text{If $\delta<\delta_1$ then } \int_{\partial^* F}\sqrt{\sum_{i=1}^d (\nu_{\partial^*F})_i^2 e^{x_i^2}}-1 \,d\Haus^{d-1}_{\gamma_d}(x) \ge c_1.    
    \end{equation}
    Assuming that \cref{eq:subtask} holds, thanks to \cref{eq:tmp88}, we deduce $\frac1{\sqrt{2\pi}}I_{(0,1)^d}(\lambda) - I_\gamma(\lambda) \ge \min\{\delta_1, c_1\}$ and therefore the statement of the Theorem follows.

    Let us now prove \cref{eq:subtask}. As will be clear from the proof, one can choose $\delta_1\defeq \min\{1, \big(\tfrac{\abs{\Phi^{-1}(\lambda)}}4 \big)^4\}$ (notice that $\Phi^{-1}(\lambda)\not=0$ because $\lambda\not=\frac12$).
    By the dimension free stability of the Gaussian isoperimetric inequality \cite[Main Theorem and Proposition 4]{BarchiesiBrancoliniJulin2017}, there is an affine half-space $H\subseteq\R^d$ with $\gamma_d(H)=\gamma_d(F)=\lambda$ so that
    \begin{equation}\label{eq:tmp83}
        \gamma_d(F\triangle H) \le C_1\delta^{\frac12},
    \end{equation}
    for a constant $C_1=C_1(\lambda)>0$ independent of the dimension $d$. Let $\nu_{\partial H}\in\R^d$ be the outer normal to the affine hyperplane $\partial H$. Let $\ell>0$ denote the distance between $\partial H$ and $0_{\R^d}$. Notice that $I_\gamma(\lambda)=\Haus^{d-1}_{\gamma_d}(\partial H) = \varphi(\ell)$ (therefore if a constant depends only on $\ell$ then it depends only on $\lambda$). 
    We also know \cite[Corollary 2 and Proposition 4]{BarchiesiBrancoliniJulin2017}
    \begin{equation}\label{eq:tmp84} %TODO: Add a picture!
        \int_{\partial^*F} \abs{\nu_{\partial^*F} - \nu_{\partial H}}^2 \,d\Haus^{d-1}_{\gamma_d} \le C_2\delta,
    \end{equation}
    for a constant $C_2=C_2(\lambda)>0$ independent of the dimension $d$.

    By Jensen's inequality applied in the integrand (on the concave function $\sqrt{\emptyparam}-1$; observe that $\sum_{i=1}^d (\nu_{\partial^*F})_i^2=1$), we have
    \begin{align*}
        &\int_{\partial^* F}
        \sqrt{\sum_{i=1}^d (\nu_{\partial^*F})_i^2 e^{x_i^2}}-1 \,d\Haus^{d-1}_{\gamma_d}(x)
        \\
        &\quad\ge
        \sum_{i=1}^d
        \int_{\partial^* F} (\nu_{\partial^*F})_i^2
        \Big[
        \exp\big(\tfrac12 x_i^2\big) - 1
        \Big] \,d\Haus^{d-1}_{\gamma_d}(x).
    \end{align*}
    Define $U_i\defeq \{x\in \partial^*F:\, \nu_{\partial^*F}(x)_i^2 \ge \frac12 (\nu_{\partial H})_i^2\}$. Using the newly-defined sets $U_i$, we can continue the chain of inequalities
    \begin{equation}\label{eq:tmp21}
    \begin{aligned}
        \ge \frac12 \sum_{i=1}^d (\nu_{\partial H})_i^2
        \int_{U_i}\Big[
        \exp\big(\tfrac12 x_i^2\big) - 1
        \Big] \,d\Haus^{d-1}_{\gamma_d}(x) .
    \end{aligned}
    \end{equation}
    We show that for many indexes $i$, the set $U_i$ almost saturates $\partial^*F$.
    We have
    \begin{equation*}
    \begin{aligned}
        \frac12 \sum_{i=1}^d (\nu_{\partial H})_i^2 \Haus^{d-1}_{\gamma_d}(\partial^*F\setminus U_i)
        &\le
        \sum_{i=1}^d \int_{\partial^*F\setminus U_i} \abs{\nu_{\partial^*F}(x)_i^2-(\nu_{\partial H})_i^2} \,d\Haus^{d-1}_{\gamma_d}(x)
        \\
        &\le
        \int_{\partial^*F} \sum_{i=1}^d \abs{\nu_{\partial^*F}(x)_i^2-(\nu_{\partial H})_i^2} \,d\Haus^{d-1}_{\gamma_d}(x).
    \end{aligned}
    \end{equation*}
    For any two vectors $u, v\in\R^d$, the Cauchy–Schwarz inequality implies $\sum_{i=1}^d\abs{u_i^2-v_i^2} \le \abs{u-v}\abs{u+v}$, and thus we can continue the chain of inequalities as follows
    \begin{equation*}
    \begin{aligned}
        &\le
        2\int_{\partial^*F} \abs{\nu_{\partial^*F}(x)-\nu_{\partial H}}
        \,d \Haus^{d-1}_{\gamma_d}(x)
        \\
        &\le 
        2\Big(
            \int_{\partial^*F} \abs{\nu_{\partial^*F}(x)-\nu_{\partial H}}^2
            \,d \Haus^{d-1}_{\gamma_d}(x)
        \Big)^{\frac12} 
        \Per_{\gamma_d}(F)^{\frac12}
        \le
        2 \sqrt{C_2\delta} \Per_{\gamma_d}(F)^{\frac12}
    \end{aligned}
    \end{equation*}
    where in the last step we have applied \cref{eq:tmp84}. Hence (assuming $\delta\le 1$, so that $\Per_{\gamma_d}(F)$ is controlled) we deduce
    \begin{equation*}
        \sum_{i=1}^d (\nu_{\partial H})_i^2 \Haus^{d-1}_{\gamma_d}(\partial^*F\setminus U_i)
        \le C_3 \delta^{\frac12},
    \end{equation*}
    where $C_3=C_3(\lambda)>0$ is a constant. Hence, there is a subset $J\subseteq \{1, 2,\dots, d\}$ such that
    \begin{equation*}
        \sum_{i\in J} (\nu_{\partial H})_i^2 \ge \frac12 \quad \text{ and } \quad
        \Haus^{d-1}_{\gamma_d}(\partial^*F\setminus U_i) \le 2C_3\delta^{\frac12} 
        \text{ for all $i\in J$.}
    \end{equation*}

    Now, fix $i\in J$. We show a lower bound for $\int_{U_i} \exp(\tfrac12 x_i^2)-1\,d\Haus^{d-1}_{\gamma_d}(x)$ by projecting onto $\partial H$.
    Let $\pi_{\partial H}:\R^d\to\partial H$ be the orthogonal projection on the affine hyperplane $\partial H$. Denote with $B(\partial H, r)$ the set of points with distance $< r$ from $\partial H$, i.e., $B(\partial H, r) \defeq \{x\in\R^d:\, \dist(x, \partial H) < r\}$. Observe that for any subset $U\subseteq B(\partial H, r)$, since $\pi_{\partial H}$ is $1$-Lipschitz, the area formula~\cite[Theorem 3.2.3]{Federer1969} gives\footnote{To justify the second step notice that $y\in \pi_{\partial H}^{-1}(x)\cap B(\partial H, r)$ implies $y=x+t\nu_{\partial H}$ for some $t\in[-r,r]$ and thus $\varphi_d(y)=\varphi_d(x+t\nu_{\partial H})=\frac{\varphi(\ell+t)}{\varphi(\ell)}\varphi_d(x)\ge \frac{\varphi(\ell+r)}{\varphi(\ell)}\varphi_d(x)$.}
    \begin{align*}
        \Haus^{d-1}_{\gamma_d}(U) 
        &\ge
        \int_{\partial H} 
            \sum_{y\in \pi_{\partial H}^{-1}(x)\cap U} \varphi_d(y)
        \,d\Haus^{d-1}(x)
        \ge
         \int_{\partial H} 
            \sum_{y\in \pi_{\partial H}^{-1}(x)\cap U} \frac{\varphi(\ell+r)}{\varphi(\ell)}
        \,d\Haus^{d-1}_{\gamma_d}(x)
        \\
        &\ge
        \frac{\varphi(\ell+r)}{\varphi(\ell)} \Haus^{d-1}_{\gamma_d}(\pi_{\partial H}(U)),
    \end{align*}
    hence
    \begin{equation}\label{eq:tmp93}
        \Haus^{d-1}_{\gamma_d}\big(\pi_{\partial H}(U)\big) \le
        \frac{\varphi(\ell)}{\varphi(\ell+r)}\Haus^{d-1}_{\gamma_d}(U).
    \end{equation}
    Define the subset $V_i\subseteq\partial H$ as 
    \begin{equation*}
        V_i\defeq \pi_{\partial H}\Big(U_i\cap B(\partial H, \delta^{\frac14})\Big) .
    \end{equation*}
    Let us show that $V_i$ \emph{saturates} $\partial H$, 
    \begin{equation}\begin{aligned}
        \Haus^{d-1}_{\gamma_d}(V_i)
        &\ge
        \Haus^{d-1}_{\gamma_d}\Big(
            \pi_{\partial H}\big(
                \partial^* F\cap B(\partial H, \delta^{\frac14})
            \big)
        \Big)
        -
        \Haus^{d-1}_{\gamma_d}\Big(
            \pi_{\partial H}\big(
                (\partial^*F\setminus U_i)\cap
                B(\partial H, \delta^{\frac14})
            \big)
        \Big)
        \\
        &\ge
        \varphi(\ell) - 
        \frac{\varphi(\ell)}{\varphi(\ell+\delta^{\frac14})}
        \frac{\gamma_d(F\triangle H)}{\delta^{\frac14}}
        -\frac{\varphi(\ell)}{\varphi(\ell+\delta^{\frac14})}
        \Haus^{d-1}_{\gamma_d}\Big(\partial^*F\setminus U_i\Big) ,
    \end{aligned}\end{equation}
    where we have used \cref{lem:gaussian-boundary-stability,eq:tmp93}. Combining the latter inequality with \cref{eq:tmp83} and with the estimate $\Haus^{d-1}_{\gamma_d}(\partial^*F\setminus U_i) \le 2C_3\delta^{\frac12}$, if $\delta$ is sufficiently small with respect to $\ell$ (which depends only on $\lambda$), we obtain
    \begin{equation}\begin{aligned}
        \Haus^{d-1}_{\gamma_d}(V_i)
        \ge
        \varphi(\ell) - 
        C_4\delta^{\frac14} ,
    \end{aligned}\end{equation}
    for a constant $C_4=C_4(\lambda)>0$.

    For a real parameter $0<\kappa$, let $L_\kappa:\R\to\R$ be the function
    \begin{equation*}
        L_{\kappa}(s) \defeq 
        \begin{cases}
            s + \kappa &\quad\text{if $s\le -\kappa$,} \\
            0 &\quad\text{if $-\kappa \le s\le \kappa$,} \\
            s - \kappa &\quad\text{if $\kappa \le s$.}
        \end{cases}
    \end{equation*}
    Observe that, if $x\in \partial H$ and $y = x + t\nu_{\partial H}$, then 
    $\exp(\frac12 y_i^2)-1 \ge \exp(\frac12 L_{\nu_it}(x_i)^2)-1$.
    Therefore, by repeating the same argument we employed to establish \cref{eq:tmp93}, we get
    \begin{equation*}
        \int_{U_i}\exp\big(\tfrac12 x_i^2\big)-1 \,d\Haus^{d-1}_{\gamma_d}(x)
        \ge 
        \frac{\varphi(\ell+\delta^{\frac14})}{\varphi(\ell)}
        \int_{V_i} 
        \exp\big(\tfrac12 L_{\nu_i\delta^{\frac14}}(x_i)^2\big) - 1
        \,d\Haus^{d-1}_{\gamma_d}(x).
    \end{equation*}
    Applying \cref{lem:lowerbound-integral-generic}, if we assume that $\nu_i\delta^{\frac14}<\ell/4$, the last estimate implies
    \begin{equation*}
        \int_{U_i}\exp\big(\tfrac12 x_i^2\big)-1 \,d\Haus^{d-1}_{\gamma_d}(x)
        \ge
        c_5,
    \end{equation*}
    for some constant $c_5=c_5(\ell)>0$ which does not depend on the dimension $d$.
    Combining the latter inequality with \cref{eq:tmp21}, we obtain
    \begin{align*}
        \int_{\partial^* F}
        \sqrt{\sum_{i=1}^d (\nu_{\partial^*F})_i^2 e^{x_i^2}}-1 \,d\Haus^{d-1}_{\gamma_d}(x)
        &\ge \frac12 \sum_{i\in J} (\nu_{\partial H})_i^2
        \int_{U_i}\Big[
        \exp\big(\tfrac12 x_i^2\big) - 1
        \Big] \,d\Haus^{d-1}_{\gamma_d}(x)
        \\
        &\ge \frac12 c_5 \sum_{i\in J} (\nu_{\partial H})_i^2
        \ge \frac14 c_5,
    \end{align*}
    that concludes the proof of \cref{eq:subtask} as $c_5$ is a positive constant that does not depend on the dimension $d\ge 1$.
\end{proof}

\printbibliography

\end{document}